\documentclass{amsart}
\usepackage{amssymb}
\usepackage[foot]{amsaddr}
\usepackage[utf8]{inputenc}
\usepackage[T1]{fontenc}
\usepackage[english]{babel}
\usepackage{mathtools}
\usepackage{csquotes}
\usepackage{comment}
\usepackage{xargs}
\usepackage{graphicx, color}
\setkeys{Gin}{draft=false}
\usepackage[figurewithin=none]{caption,subcaption}
\graphicspath{{./pictures/}} %Uso: \graphicspath{{RelativePath}}
\usepackage{enumitem}
\setlist[enumerate,1]{label={(\roman*)}}

\usepackage[pagewise,modulo,mathlines]{lineno}
\usepackage[notcite,notref,color,final]{showkeys}

\usepackage{hyperref}
\hypersetup{
  hidelinks,
  colorlinks  = true,    % Colours links instead of ugly boxes
  urlcolor    = blue,    % Colour for external hyperlinks
  linkcolor   = blue,    % Colour of internal links
  citecolor   = blue,      % Colour of citations
  pdfsubject  = {Maniplexes},%
  pdftitle    = {Every finite premaniplex of rank 4 is the symmetry type graph of a finite maniplex},  %
}

\usepackage[capitalise,noabbrev, nameinlink]{cleveref}
\usepackage{tikz-cd}
\tikzcdset{%
row sep=tiny,%
arrows={shorten=-4pt}
}
\usetikzlibrary{shapes.geometric}
\usetikzlibrary{decorations.markings}
\usetikzlibrary{calc}

\usepackage[loadshadowlibrary,shadow, colorinlistoftodos,textsize=tiny,obeyFinal]{todonotes}
\setuptodonotes{fancyline, backgroundcolor=gray!10,bordercolor=gray}

\newcommandx{\inline}[2][1=]{\todo[inline, #1]{#2}}

\makeatletter
  \providecommand\@dotsep{5}
\makeatother

\theoremstyle{plain}
\newtheorem{theorem}{Theorem}[section]

\newtheorem{lemma}[theorem]{Lemma}
\theoremstyle{definition}

\newtheorem{problem}[theorem]{Problem}
\newtheorem{question}[theorem]{Question}

\theoremstyle{remark}
\newtheorem{remark}[theorem]{Remark}

\newcommand{\V}{\mathrm{V}}

\newcommand{\ZZ}{\mathbb{Z}}
\newcommand{\U}{\mathcal U}
\renewcommand{\P}{\mathcal P}
\newcommand{\N}{\mathcal N}
\renewcommand{\H}{\mathcal H}

\newcommand{\G}{\mathcal G}
\newcommand{\No}{\mathbf N}
\newcommand{\Aut}{\mathrm{Aut}}
\newcommand{\id}{{\mathrm{id}}}
\DeclareMathOperator{\STG}{STG}
\newcommand{\stg}{\STG}
\DeclareMathOperator{\CT}{CT}
\DeclareMathOperator{\beg}{beg}
\DeclareMathOperator{\inv}{inv}

\definecolor{myblue}{RGB}{0, 70, 180}
\definecolor{myred}{RGB}{180, 40, 40}
\tikzset{
  vertex/.style={
    circle,
    draw,
    fill=white,
    inner sep=2pt,
    minimum size=6pt
  }
}

\usepackage[%
sorting=nyt, %
backend=bibtex, %
style=numeric,%
url=false,%
eprint=false,%
giveninits=true,
maxbibnames=9,%
isbn=false,%
sortcites = true,%
citestyle=numeric-comp,
]{biblatex}
\keywords{symmetry type graph, maniplex, lifting}
\subjclass[2020]{Primary: 05E18; Secondary: 52B15}

\title{Premaniplexes of rank $3$ and $4$ as symmetry type graphs of maniplexes}
\author{Maru\v{s}a Lek\v{s}e}
\address{Institute of Mathematics, Physics and Mechanics, Jadranska 19, SI-1000 Ljubljana, Slovenia and Faculty of Mathematics and Physics, University of Ljubljana, SI-1000 Ljubljana, Slovenia}
\email{marusa.lekse@imfm.si}
\begin{document}

\begin{abstract}
We show that every finite premaniplex of rank $3$ or $4$ is the symmetry type graph of
a finite maniplex, settling the finite rank $3$ and $4$ case of the maniplex version of 
the symmetry type graph problem. The proof uses the fact that the universal string
 Coxeter groups of
rank $3$ and $4$ are amalgams (of finite groups), hence they act on a tree, 
which allows us to use a
lifting theorem of Potočnik and Spiga.
\end{abstract}

\maketitle
%\marusa{Abstract could be better}

\section{Introduction}
A \emph{maniplex} of rank $n$ is a connected simple $n$-valent graph whose edges 
are properly coloured with the colours
$0,1,\dots,n-1$, and in which edges
of colours $i$ and $j$ form alternating $4$-cycles whenever $|i-j| \ge 2$. 
They were introduced by Wilson~\cite{Wilson2012} as a generalisation of maps
and of abstract polytopes. The flag graphs of abstract polytopes are maniplexes, 
while not every maniplex is the flag graph
of a polytope. Besides being an interesting object to study on their own,
maniplexes are also a useful tool to study abstract polytopes using graph theory. 

The quotient of $M$ by its automorphism group is called the \emph{symmetry type graph}
$\STG(M) = M/\Aut(M)$. They were defined in \cite{CunninghamRioHubardToledo2015}.
The quotient of a maniplex of rank $n$ by a group of automorphisms is always a
\emph{premaniplex} of rank $n$: a connected graph whose edges and semiedges are 
coloured with $n$
colours, one of each colour at each vertex, and in which $4$-walks of alternating colours
 $i$ and $j$ are closed whenever $|i-j|\ge2$. In particular, every maniplex is a premaniplex, 
 and every symmetry type graph 
 of a maniplex of rank $n$
is a premaniplex of rank $n$. Whether every premaniplex can be obtained 
as the symmetry type graph of some maniplex is
 an open question
(see~\cite{CunninghamPellicer2018} for a list of open problems on
polytopes and maniplexes).

\begin{problem}[{\cite[Problem 12]{CunninghamPellicer2018}, \cite{HubardMochan2023}}]\label{prob:main}
	Given a premaniplex $X$, does there exist a maniplex $M$ with $\STG(M)\cong X$?
  Can $M$ be chosen to be polytopal (that is, can $M$ be the flag graph of a polytope)?
\end{problem}

For premaniplexes with two vertices, the maniplex version was resolved by Pellicer,
Poto\v{c}nik and Toledo~\cite{PellicerPotocnikToledo2019}.
The polytopality of the maniplexes that they constructed was later analysed by
Moch\'{a}n~\cite{Mochan2024}. 
Progress has also been made for rank $3$. The author and Toledo show 
in \cite{LekseToledo} that every (polytopal) map 
(a premaniplex of rank $3$ with no semiedges and no parallel $0$ and $2$ edges)
can be obtained as the symmetry type graph of a (polytopal) map. This settles a large part of 
rank $3$.
%Even more,Potočnik 
%shows in \cite{Potocnik} that every premaniplex of rank $3$ can be obtained as the symmetry type graph of a maniplex, 
%where it is even possible to control the type of the maniplex that covers the original maniplex.
The aim of this note is to study the maniplex version for rank $3$ and $4$.
We will show that for any finite premaniplex $X$ of rank $3$ or $4$ and a group $G\le \Aut(X)$ there
exists a finite maniplex $M$ that is a regular cover of $X$, and such that 
$\Aut(M)$ is precisely the lift of $G$.

\begin{theorem}\label{th:main}
	Every finite premaniplex of rank $3$ or $4$ is the symmetry type graph of a finite maniplex.
\end{theorem}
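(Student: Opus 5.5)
We prove the stronger statement from the introduction. Given a finite premaniplex $X$ of rank $n\in\{3,4\}$, take $G=\{\id\}$. We will find a finite maniplex $M$ that regularly covers $X$ and satisfies $\Aut(M)=\tilde G$, where $\tilde G$ is the lift of $G$. Then $\Aut(M)$ is the group of covering transformations, and $M/\Aut(M)\cong X$, which is the theorem.

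\textbf{Translation into groups.} Let $W$ be the universal string Coxeter group of rank $n$, generated by involutions $r_0,\dots,r_{n-1}$ with $(r_ir_j)^2=1$ for $|i-j|\ge2$. A premaniplex of rank $n$ is the same as a transitive action of $W$ on its vertex set $V$: the generator $r_i$ acts by following the $i$-edge or $i$-semiedge. Fix a base vertex $v$ with stabiliser $H=W_v$, which has finite index. A connected regular cover $M\to X$ corresponds to a subgroup $N\le H$ with $N\trianglelefteq W$. The cover is a maniplex exactly when $W$ acts freely on $W/N$, i.e.\ when no conjugate of a generator lies in $N$ and no conjugate of $r_ir_j$ (with $|i-j|\ge 2$) lies in $N$. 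These conditions say there are no semiedges, and no $0$-$2$ type $4$-cycles collapsing to double edges, so the graph is simple. Every automorphism of $X$ or $M$ is then a colour-preserving permutation commuting with $W$. The requirement $\Aut(M)=\tilde G$ becomes
\[
N_W(N)/N=\cdots, \quad\text{more precisely}\quad \mathrm{Aut}(M)\cong N_W(N)/N .
\]
Since $N$ is normal, $\mathrm{Aut}(M)$ is all of $W/N$ acting on the right. So we must instead choose $N$ normal in $H$ but not in $W$, with $N_W(N)=\tilde H$, the preimage of $G$. With $G$ trivial, this means $N_W(N)=H$, and $H/N$ acts as the covering group.

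\textbf{Using the tree.} For $n=3$, $W\cong D_2 *_{C_2} D_2$ up to the standard splitting. More usefully, $W=\langle r_0,r_2\rangle *_{\langle r_2\rangle}\langle r_1,r_2\rangle$ with vertex groups $C_2^2$ and $D_\infty$. We would rather use a splitting with finite vertex groups, namely $W=\langle r_0,r_1\rangle\ast\ldots$. For $n=4$, $W=\langle r_0,r_2,r_3\rangle *_{\langle r_2,r_3\rangle}\langle r_0,r_1,r_2\rangle$ after passing to a finite-index subgroup, where all relevant groups are finite. Concretely, we would take the torsion-free finite-index normal subgroup and use that $W$ is virtually free, so $W$ acts on a tree $T$ with finite vertex stabilisers. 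Then $X$ corresponds to the finite quotient graph of groups $H\backslash T$, and covers $M$ correspond to normal subgroups of $H$ acting on $T$.

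Next we invoke the Potočnik--Spiga lifting theorem. It gives a finite-index normal subgroup $N\trianglelefteq H$ such that $N$ meets no conjugate of the finite vertex stabilisers, hence contains no torsion, which handles the freeness conditions. The theorem also ensures that every element of $W$ normalising $N$ lies in $H$, i.e.\ that the lift of $\Aut$ is controlled and no extra automorphisms appear. We then check that $M=W/N$ with its coloured edges is a finite simple maniplex covering $X$, with $\Aut(M)=H/N$ acting with quotient $X$.

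\textbf{Main obstacle.} The hard step is establishing the correct amalgam decomposition for rank $3$ and $4$ so that the hypotheses of the lifting theorem hold. Rank $3$ is the worrying case, since the dihedral subgroups $\langle r_0,r_1\rangle$ are infinite. I would first pass to a quotient in which $(r_0r_1)^p=(r_1r_2)^q=1$ for large $p,q$ that are compatible with $X$. Such a quotient is a hyperbolic triangle group, hence virtually free after taking a suitable amalgam. Alternatively, for rank $3$, I would work directly with $W\cong C_2 * C_2^2$ (via $r_1$ and $\langle r_0,r_2\rangle$). For rank $4$, $W$ is $\langle r_0,r_2,r_3\rangle$-type products, with the middle dihedral handled similarly. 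The second delicate point is the condition $N_W(N)=H$, which prevents extra symmetry. Here I would rely on the rigidity part of Potočnik--Spiga: choose $N$ so that $H\backslash T$ and $N\backslash T$ have distinguished local structure, forcing the normaliser to be $H$.
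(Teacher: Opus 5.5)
Your skeleton matches the paper: reduce to finding $\P$ with $\No_W(\P)=H$, then apply Poto\v{c}nik--Spiga on a tree with finite stabilisers. However, the way you invoke \cref{thm:lifting5} leaves genuine gaps. That theorem does not produce a torsion-free subgroup. Its input is a finite-index $\N\trianglelefteq\G$ that already has trivial vertex and edge stabilisers on $T$, and for which $\H/\N$ acts faithfully on $\mathrm{H}_1(T/\N;\ZZ)$; its output is $\P\trianglelefteq\N$. Your $H$ cannot serve as $\N$ in general: a semiedge of colour $i$ in $X$ puts a conjugate of $r_i$ into $H$, and that element fixes a vertex of $T$. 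What is missing is an intermediate cover. The paper first takes the $\ZZ_2^n$-voltage cover of \cref{lem:cover2}, a finite maniplex $M_1=\U/\N$ regularly covering $X$. Then $\N\trianglelefteq H$, and $\N$ contains no conjugate of $r_i$ or of $r_ir_j$ for \emph{any} $i\ne j$. This also repairs your simplicity claim. Torsion-freeness excludes conjugates of $r_i$ and of commuting products $r_ir_j$. But a conjugate of $r_ir_{i+1}$ has infinite order and yields parallel $i$- and $(i+1)$-edges, and your list of forbidden elements omits exactly these. In the paper, $\U/\P$ is a maniplex simply because $\P\le\N$. You also never check the homology hypothesis. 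The paper obtains it from \cref{rem:H1}: $T/\N$ is finite, connected and without semiedges, with valencies $2$ and $4$ and at least one vertex of valency $4$.

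Second, even once $\P$ is obtained, \cref{thm:lifting5} only yields $\No_{\H}(\P)=\G$ with $\H=\No_{\Aut(T)}(\N)$. So it controls only elements that normalise $\N$; it does not say that every element of $W$ normalising $\P$ lies in $H$. An element of $\No_W(\P)\setminus\No_W(\N)$ would be an automorphism of $\U/\P$ that does not project to $M_1$, hence lies outside the lift. Your appeal to ``rigidity'' and ``distinguished local structure'' does not rule this out. The paper closes the gap by taking $p>[W:\N]$. Then $\N/\P$ is a Sylow $p$-subgroup of the finite group $\No_W(\P)/\P$ of index less than $p$, hence normal. So $\No_W(\P)\le\No_W(\N)$, and therefore $\No_W(\P)=\G$.

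Finally, the splittings you list are either invalid or have infinite vertex groups: $\langle r_0,r_1\rangle$, $\langle r_1,r_2\rangle$ and $\langle r_2,r_3\rangle$ are infinite dihedral, and $\langle r_0,r_1,r_2\rangle\cong W_3$. Hyperbolic triangle groups are virtually surface groups, not virtually free, so passing to such a quotient of $W$ cannot work (and is unnecessary). Only your alternative $W_3\cong\langle r_0,r_2\rangle*\langle r_1\rangle$ is right. For rank $4$, use $\langle r_0,r_2\rangle*_{\langle r_0\rangle}\langle r_0,r_3\rangle*_{\langle r_3\rangle}\langle r_1,r_3\rangle$. It is read off from the path $2$--$0$--$3$--$1$ of commuting generators, and its Bass--Serre tree has Klein four vertex stabilisers.
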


%%
%The strategy of showing this is viewing a finite maniplex $M$ of rank $4$ in the following way.
%Let $W$ be the universal string coxeter group of rank $4$, and $\mathcal U$ the universal
%maniplex of rank $4$. Then $M \cong \mathcal U/\N$ for some $\N \le W$.
%Beside on the universal maniplex, $W$ also acts on an infinite tree $T$, and we can 
%instead study the quotient
%$M' := T/\N$. It is not the original maniplex, but a finite $(4,2)$-valent graph. 
%This enables us to use a theorem of Poto\v{c}nik and
%Spiga~\cite{PotocnikSpiga2019} to find an appropriate finite cover $T/\P$ of $M'$.
%Finally, we look at the quotient $\mathcal U/ \P$, which is the cover of $M$ that we aim to construct.

The proof proceeds as follows. 
Let $X$ be a finite premaniplex of rank $3$ or $4$. 
We first find a finite maniplex $M$ and a group $G\le\Aut(M)$ with 
$M / G \cong X$. After that it
suffices to construct a finite cover of $M$ whose full automorphism group is precisely
the lift of $G$, as this implies that $\stg(M) = X$.

Let $W$ be the universal string Coxeter group of rank $3$ or $4$ and let $\U$ be the universal
maniplex of rank $3$ or $4$, so that $M \cong \U/ \N$ for a subgroup $\N\le W$ of
finite index. Besides
acting on $\U$, the group $W$ also acts on an infinite tree $T$, and we
can instead study the quotient $\Gamma:=T / \N$. 
This is not the original maniplex,
it is a finite graph whose vertices have valency $2$
or $4$. This enables us to use a theorem of Potočnik and 
Spiga \cite{PotocnikSpiga2019}, which
gives us a subgroup $\P\le\N$ (such that exactly the prescribed
automorphisms lift from $T/\N$ to $T/ \P$). The same $\P$ then gives a
cover $\U/\P$ of $M$ that we aimed to construct.

\section{Preliminaries}
All group actions will be on the right. We will use $G/H$ to denote the right cosets space
for a subgroup $H$ of $G$.
A \emph{graph} is
a $4$-tuple $\Gamma=(\mathrm{D},\V;\beg,\inv)$ of darts and vertices, an edge is a pair
$\{x,x^{-1}\}$ of mutually inverse darts, a \emph{semiedge} is an edge with $x=x^{-1}$,
and the \emph{valency} of a vertex is the number of darts with that tail. Since maniplexes
and premaniplexes are undirected graphs, we will lighten the notation by giving the set of
vertices (which, as is standard in maniplexes, are called \emph{flags}) and then the
sets of $i$-edges for $i \in \{0, \ldots, n-1\}$.
%
%\begin{definition}\label{def:premaniplex}
	%A \emph{premaniplex of rank $4$} is a connected graph $X$ together with a colouring of
	%its edges and semiedges by the four colours $0,1,2,3$, such that
	%\begin{enumerate}
	%	\item for each colour $i$, every vertex of $X$ is the tail of exactly one dart
	%		of colour $i$ (in particular $X$ has no loops, and every vertex has valency $4$); and
	%	\item for each pair $\{i,j\}$ with $|i-j|\ge2$, every walk of length $4$ with alternating colours $i$ and $j$ is closed.
	%\end{enumerate}
	%A premaniplex is a \emph{maniplex of rank $4$} if it contains no semiedges and no parallel
	%edges. The walks in (ii) are then $4$-cycles.
%\end{definition}
 An \emph{automorphism} of a (pre)maniplex is
a colour-preserving automorphism of the graph. It always acts semi-regularly.

A \emph{covering projection} of graphs is a surjective graph homomorphism
$\wp:\tilde\Gamma\rightarrow\Gamma$ that satisfies the local bijection condition: 
for every vertex $\tilde u$ of
$\tilde\Gamma$, $\wp$ maps the darts with tail $\tilde u$ bijectively onto the darts with tail
$\wp(\tilde u)$. We call $\tilde\Gamma$ a \emph{cover} of $\Gamma$
%, and the \emph{fibre} over a vertex $u$ of $\Gamma$ is the set $\wp^{-1}(u)$. 
The group of \emph{covering transformations} $\CT(\wp)$ is the group of those automorphisms 
of $\tilde\Gamma$ that
preserve every fibre of the projection setwise. If $\CT(\wp)$ acts regularly
on each fibre, then we say that $\wp$ is a
\emph{regular} covering projection and $\tilde\Gamma$ is a \emph{regular cover} of
$\Gamma$. In this case $\Gamma\cong\tilde\Gamma/\CT(\wp)$. An automorphism $\alpha$ of
$\Gamma$ \emph{lifts} along $\wp$ if there is an automorphism $\tilde\alpha$ of
$\tilde\Gamma$ with $\wp\tilde\alpha=\alpha\wp$. If every automorphism in a subgroup
$G\le\Aut(\Gamma)$ lifts along $\wp$, then the set of all lifts of all elements of $G$ is a
subgroup of $\Aut(\tilde\Gamma)$, called the \emph{lift} of $G$.
For covers of (pre)maniplexes we require
in addition that $\wp$ preserve the colours of darts.

Regular covers can be given by voltages (\cite{Gross1974, GrossTucker1977, Malnic1998, MalnicNedelaSkoviera2000}). Given a graph $\Gamma=(\mathrm{D},\V;\beg,\inv)$  and a group
$G$, a \emph{voltage assignment} on $\Gamma$ is a map $\zeta: \mathrm{D} \rightarrow G$ 
such that $\zeta(\inv(x))=\zeta(x)^{-1}$ for every dart $x$. The \emph{derived graph} $\tilde \Gamma$ has
vertex set $\V(\Gamma)\times G$ and dart set $\mathrm{D}(\Gamma)\times G$, where the dart
$(x,g)$ has tail $(\beg(x),g)$ and inverse $(x^{-1},\zeta(x)g)$. If $\Gamma$ is a
(pre)maniplex we give $(x,g)$ the colour of $x$. The projection $\wp:(u,g)\mapsto u$ is a covering projection,
and if $\tilde \Gamma$ is connected, then it is regular with $\CT(\wp)\cong G$. Every regular cover can be given as a voltage assignment.
If $\tilde \Gamma$ is not connected,
then the projection $\wp|_{\tilde\Gamma_1}:\tilde\Gamma_1 \rightarrow \Gamma$
induced on a connected component $\tilde\Gamma_1$ of $\tilde\Gamma$ is a regular covering projection.
The voltage of a path is defined as the product of voltages of its underlying darts.
An automorphism $g$ of $\Gamma$ lifts along $\wp$ (or $\wp|_{\tilde\Gamma_1}$) if and only if for every closed walk $W$ in $\Gamma$ that has trivial voltage,
the walk $W^g$ also has trivial voltage.

\subsection{Maniplexes as subgroups of string Coxeter groups}\label{sec:quotients} 

The universal string Coxeter group of rank $n$ is
\[
	W_n = \langle\, r_0,\dots,r_{n-1} \mid r_i^2,\ (r_ir_j)^2 \ \text{for } |i-j|\ge2 \,\rangle .
\]
Note that for $n=3$
\[
	W_3 \cong \langle r_0,r_2\rangle *\langle r_1\rangle,
\]
and that for $n=4$
\[
	W_4 \cong \langle r_0,r_2\rangle *_{\langle r_0\rangle} \langle r_0,r_3\rangle
		*_{\langle r_3\rangle} \langle r_1,r_3\rangle .
\]

The elements $r_0, \ldots r_{n-1}$ are called the \emph{standard generators} of $W_n$.
The \emph{universal maniplex} $\U_n$ of rank $n$ is the Cayley graph of $W_n$ with respect
to the generating set $\{r_0,\dots,r_{n-1}\}$ (its vertices are the elements of $W_n$, and
$w$ is connected to $r_iw$ by an edge of colour $i$). 
The group $W_n$ acts on $\U_n$ by multiplication on the
right, and $\Aut(\U_n)=W_n$.

Maniplexes can be studied as subgroups of $W_n$.
For a subgroup $\N\le W_n$ let $\U_n/\N$ be the quotient of $\U_n$ by $\N$ 
(that is, the graph whose vertices are the
orbits of $\N$ on $\U_n$, in which $w\N$ is joined to $r_iw\N$ by an edge of
colour $i$; if $w\N=r_iw\N$, this is a semiedge). Note that, because $\N$ acts on the right, these orbits are the left cosets $w\N$.
The lemmas in the rest of this section are folklore, see for example
 \cite{HubardToledo2023,JonesSingerman1978,HubardMochanMontero2025,Orbanic2007,Malnic1998}.
We include some proofs for the sake of completeness.

\begin{lemma}\label{lem:quotient}
	Let $X$ be a premaniplex of rank $n$. Then the following hold:
	\begin{enumerate}
		\item $X\cong\U_n/\N$ for some subgroup $\N\le W_n$,
		\item $\U_n/\N$ is a maniplex if and only if $\N$ contains no
			conjugate of $r_i$ and no conjugate of $r_ir_j$, for all $i\ne j$.
	\end{enumerate}
\end{lemma}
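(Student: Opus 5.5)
For part (i), I will use the monodromy action of $W_n$ on the flags of $X$. For a flag $x$ of $X$ and a colour $i$, let $x^{i}$ be the other end of the unique $i$-dart at $x$; this equals $x$ itself when that dart is a semiedge. The map $x\mapsto x^i$ is an involution on the flag set $F(X)$. I will let $r_i$ act on the left by $r_i\cdot x := x^i$. The relations $(r_ir_j)^2=1$ for $|i-j|\ge 2$ hold because every alternating $ij$-walk of length $4$ in $X$ is closed. Hence this is a well-defined left action of $W_n$ on $F(X)$, and it is transitive because $X$ is connected. Next I fix a base flag $x_0$ and let $\N$ be the stabiliser of $x_0$. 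The map $w\N\mapsto w\cdot x_0$ is then a bijection from the left cosets of $\N$ to $F(X)$. This is exactly the vertex set of $\U_n/\N$, since its vertices are the left cosets $w\N$, as noted in \cref{sec:quotients}. The bijection sends the $i$-neighbour $r_iw\N$ of $w\N$ to $r_i\cdot(w\cdot x_0)=(w\cdot x_0)^i$. So it is a colour-preserving graph isomorphism. A semiedge of $\U_n/\N$, which occurs when $r_iw\N=w\N$, corresponds to a fixed point of $x\mapsto x^i$, which is a semiedge of $X$. Thus $X\cong \U_n/\N$.

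For part (ii), I will translate the conditions for a maniplex into conditions on $\N$. The graph $\U_n/\N$ is a premaniplex, so it is a maniplex exactly when it has no semiedges and no parallel edges. Given the colouring, the second condition means that no two distinct colours $i\neq j$ join the same pair of flags. A semiedge of colour $i$ at $w\N$ means $r_iw\N=w\N$, that is, $w^{-1}r_iw\in\N$, so $\N$ contains a conjugate of $r_i$. Conversely, if $w^{-1}r_iw\in\N$, then there is a semiedge at $w\N$. Next, an $i$-edge and a $j$-edge joining the same pair $\{w\N, r_iw\N\}$ means $r_jw\N=r_iw\N$, that is, $w^{-1}r_ir_jw\in\N$. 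This is a conjugate of $r_ir_j$, and again the implication reverses. Note that $(r_ir_j)^{-1}=r_jr_i$ is covered by ranging over all ordered pairs $i\ne j$.

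The only subtle point is the bookkeeping between left and right: which cosets are flags, and that the monodromy action is a genuine left action compatible with the edge rule $w\N\sim r_iw\N$. Once that is set up consistently, both parts are direct verifications. I would also remark that $\N$ is determined up to conjugacy by the choice of base flag.
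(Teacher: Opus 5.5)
Your proof is correct. The paper gives no proof of this lemma (it lists it as folklore with references), and your argument is the standard one from those sources: the left monodromy action of $W_n$ on flags, $\N$ the stabiliser of a base flag, and the coset computations $r_iw\N=w\N\iff w^{-1}r_iw\in\N$ and $r_jw\N=r_iw\N\iff w^{-1}r_ir_jw\in\N$. Your left/right bookkeeping correctly matches the paper's rule that $w\N$ is joined to $r_iw\N$.

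Two small points to tighten:
\begin{enumerate}
\item When $w^{-1}r_ir_jw\in\N$ and also $r_iw\N=w\N$, you get two semiedges rather than parallel edges. This is still not a maniplex, so the equivalence stands.
\item ``No semiedges and no parallel edges'' suffices for being a maniplex. A repeated vertex on a closed alternating $ij$-walk of length $4$ would force either a semiedge or an equality $x^i=x^j$ for some flag $x$, so these walks are genuine $4$-cycles.
\end{enumerate}
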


%\marusa{cite something also for this. ALSO be careful, this is for N without restrictions,  for premaniplexes}

\begin{lemma}\label{lem:aut}
	Let $\N\le W_n$. Then every automorphism of $\U_n/\N$ lifts to an automorphism of $\U_n$, 
  the lift of $\Aut(\U_n/\N)$ is equal to $\No_{W_n}(\N)$. More precisely,
    $\No_{W_n}(\N)$ acts on $\U_n/\N$ by $\pi(g): w\N\mapsto wg\N$. 
    This action has kernel $\N$, and the induced action of  $\No_{W_n}(\N)/\N$ on
     $\U_n/\N$ is permutationally isomorphic to
    $\Aut(\U_n/\N)$.
\end{lemma}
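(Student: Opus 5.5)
The plan is to use the standard Galois-type correspondence between quotients of the Cayley graph $\U_n$ and subgroups of $W_n$. We identify $\U_n/\N$ with the left coset space $\{w\N\}$, on which $W_n$ acts on the right. Automorphisms of $\U_n$ are colour-preserving. Since $\U_n$ is a Cayley graph with respect to left multiplication by generators, any colour-preserving automorphism $\varphi$ satisfies $\varphi(r_iw)=r_i\varphi(w)$. Because $\U_n$ is connected, this gives $\varphi(w)=w\varphi(1)$, so $\Aut(\U_n)=W_n$ acting by right multiplication, as stated earlier.

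First I will check that for $g\in\No_{W_n}(\N)$ the map $\pi(g):w\N\mapsto wg\N$ is well defined. Indeed, $wn g\N = wg(g^{-1}ng)\N = wg\N$. It is colour-preserving, because $r_iw\N\mapsto r_iwg\N$, so it is an automorphism of $\U_n/\N$. The assignment $g\mapsto\pi(g)$ is a homomorphism for the right action. Its kernel consists of the $g$ with $wg\N=w\N$ for all $w$, i.e.\ $g\in\bigcap_w w\N w^{-1}$. Taking $w=1$ gives $g\in\N$. Conversely, for $g\in\N$ and $g$ normalising $\N$ we get $wg\N=w\N$. So the kernel is exactly $\N$, and $\No_{W_n}(\N)/\N$ acts faithfully.

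Next I will show surjectivity onto $\Aut(\U_n/\N)$, which is the lifting claim. Let $\alpha\in\Aut(\U_n/\N)$ and choose $g$ with $\alpha(1\N)=g\N$. Any flag has the form $w\N$, reached from $1\N$ by the walk with colour word read off from $w=r_{i_k}\cdots r_{i_1}$. Since $\alpha$ preserves colours and each vertex has exactly one dart of each colour, $\alpha$ commutes with the colour moves. Hence $\alpha(w\N)=w g\N$ for all $w$. Well-definedness forces: if $w\N=w'\N$ then $wg\N=w'g\N$. Taking $w=n\in\N$ and $w'=1$ yields $ng\N=g\N$, i.e.\ $g^{-1}ng\in\N$. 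Thus $g^{-1}\N g\le\N$. Applying the same argument to $\alpha^{-1}$, which corresponds to some $g'$ with $g g'\in\N$, gives the reverse inclusion, so $g\in\No_{W_n}(\N)$ and $\alpha=\pi(g)$.

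Finally, the lifting statement follows. Right multiplication by $g$ on $\U_n$ is an automorphism, and the projection $w\mapsto w\N$ intertwines it with $\pi(g)$. So every automorphism of the quotient lifts, and its lifts are exactly the right multiplications by elements of $g\N\subseteq\No_{W_n}(\N)$. Conversely, any $h\in W_n$ whose right multiplication projects to an automorphism must map fibres to fibres. That means $w\N h=wh\N$ for all $w$, which forces $h\in\No_{W_n}(\N)$. Hence the lift of $\Aut(\U_n/\N)$ is $\No_{W_n}(\N)$, and the induced action is permutationally isomorphic to $\Aut(\U_n/\N)$. I expect the main obstacle to be the surjectivity step, specifically the care needed with left/right conventions (right action of $W_n$ on left cosets) and with semiedges, where a colour move fixes a vertex. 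The uniqueness of darts of each colour at each vertex handles the semiedge case uniformly.
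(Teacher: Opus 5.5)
The paper gives no proof of this lemma: it is listed among the folklore lemmas (with references), and only \cref{lem:cover} and \cref{lem:cover2} are proved. So your argument has to stand on its own, and it does. It is the standard argument: well-definedness of $\pi(g)$ on left cosets, the kernel computation, surjectivity by transporting the colour moves $w\N\mapsto r_iw\N$ from the base flag $1\N$, and then identifying the lifts of $\pi(g)$ with the right multiplications by $g\N$. Every step checks out, including the semiedge case, which is handled by the uniqueness of the $i$-dart at each flag.

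\textbf{Kernel slip.} $wg\N=w\N$ is equivalent to $g\in\N$ for every $w$, since $(wg)^{-1}w=g^{-1}$. It is not equivalent to $g\in\bigcap_w w\N w^{-1}$. That intersection is the condition $gw\N=w\N$ for a left action, and it is the core of $\N$, which is strictly smaller than $\N$ when $\N$ is not normal in $W_n$. Taken literally, it is inconsistent with your own converse. Your conclusion is unaffected, because you only use $w=1$ and then verify the converse directly. There the hypothesis that $g$ normalises $\N$ is not needed.

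\textbf{Reverse inclusion.} In the surjectivity step this deserves one explicit line, because $g^{-1}\N g\le\N$ does not imply equality in the infinite group $W_n$. From $gg'\in\N$, write $g'=g^{-1}m$ with $m\in\N$. Then $g'^{-1}\N g'\le\N$ gives $g\N g^{-1}\le m\N m^{-1}=\N$.

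\textbf{Redundant final step.} Your last ``conversely'' is not needed. Any $h$ whose right multiplication projects to an automorphism is a lift of some $\pi(g)$, so it lies in $g\N\subseteq\No_{W_n}(\N)$ by what you had already shown.
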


In the following we will identify $\Aut(\U_n/\N)$ with $\No_{W_n}(\N)/\N$.

\begin{lemma}\label{lem:cover}
	Let $\P\trianglelefteq\N\le W_n$ and let
	$\wp:\U_n/\P\rightarrow\U_n/\N$ be the map defined by $w\P\mapsto w\N$. Then
	$\wp$ is a regular covering projection with $\CT(\wp)=\N/\P$.
	The largest subgroup of $\Aut(\U_n/\N)$ that lifts along $\wp$ is
			\[
				\bigl(\No_{W_n}(\N)\cap\No_{W_n}(\P)\bigr)/\N.
			\]
	Its lift is $\bigl(\No_{W_n}(\N)\cap\No_{W_n}(\P)\bigr)/\P$.
	\end{lemma}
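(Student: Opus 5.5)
The proof of \cref{lem:cover} reduces to \cref{lem:aut} and the standard description of regular covers by normal subgroups, so I would argue at the level of cosets.

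\textbf{Step 1: $\wp$ is a colour-preserving covering projection.} The map $w\P\mapsto w\N$ is well defined and surjective because $\P\le\N$. The $i$-dart at $w\P$ goes to $r_iw\P$, and its image is the $i$-dart from $w\N$ to $r_iw\N$. Colours are therefore preserved. Each vertex of either graph has exactly one dart of each colour, so $\wp$ is a bijection on darts at each vertex. A semiedge at $w\P$ maps to a semiedge, since $r_iw\in w\P$ implies $r_iw\in w\N$. Hence $\wp$ is a covering projection.

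\textbf{Step 2: regularity and $\CT(\wp)$.} Since $\P\trianglelefteq\N$, we have $\N\le\No_{W_n}(\P)$, and \cref{lem:aut} (applied to $\P$) gives an action of $\N/\P$ on $\U_n/\P$ by $w\P\mapsto wg\P$. This action preserves fibres, because $wg\N=w\N$ for $g\in\N$. It is transitive on the fibre $\{wn\P : n\in\N\}$ over $w\N$. It is also free: if $wg\P=w\P$ for some $g\in\N$, then $g$ fixes a vertex, and since automorphisms act semiregularly on the connected premaniplex $\U_n/\P$, it acts trivially, so $g\in\P$ by \cref{lem:aut}. Conversely, every covering transformation is an automorphism, so by \cref{lem:aut} it is induced by some $g\in\No_{W_n}(\P)$. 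Preserving the fibre of $\P$ means $g\P\subseteq\N$, so $g\in\N$. Therefore $\CT(\wp)=\N/\P$, and it acts regularly on the fibres.

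\textbf{Step 3: which automorphisms lift.} Let $g\N\in\No_{W_n}(\N)/\N$ and suppose it lifts to $\tilde\alpha\in\Aut(\U_n/\P)$. By \cref{lem:aut}, $\tilde\alpha$ is induced by some $h\in\No_{W_n}(\P)$. The condition $\wp\tilde\alpha=\alpha\wp$ says $wh\N=wg\N$ for all $w$. In particular $h\N=g\N$, so $g=hn$ with $n\in\N$, and $\pi(h)=\pi(g)$ on $\U_n/\N$. Hence $h\in\No_{W_n}(\N)\cap\No_{W_n}(\P)$, and $h$ represents the same element $g\N$.

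Conversely, any $h\in\No_{W_n}(\N)\cap\No_{W_n}(\P)$ induces $w\P\mapsto wh\P$ on $\U_n/\P$, which lies over $w\N\mapsto wh\N$. Therefore the largest liftable subgroup is $\bigl(\No_{W_n}(\N)\cap\No_{W_n}(\P)\bigr)/\N$.

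\textbf{Step 4: the lift.} The lifts of these automorphisms are exactly the automorphisms $\pi(h)$ of $\U_n/\P$ with $h\in\No_{W_n}(\N)\cap\No_{W_n}(\P)$. By \cref{lem:aut}, the kernel of this action is $\P$, so the lift is $\bigl(\No_{W_n}(\N)\cap\No_{W_n}(\P)\bigr)/\P$.

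The only delicate point is Step 3, where we need that every lift comes from an element normalising both $\N$ and $\P$. This is handled by invoking \cref{lem:aut} for $\P$ and then reading off membership in $\No_{W_n}(\N)$ from the coset equation.
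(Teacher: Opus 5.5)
Your proof is correct and follows essentially the same route as the paper. You use \cref{lem:aut} to write every automorphism of $\U_n/\N$ and of $\U_n/\P$ as induced by an element of the respective normaliser, and then read off $h\in\No_{W_n}(\N)\cap\No_{W_n}(\P)$ from the coset equation $h\N=g\N$. You also spell out the covering-projection and $\CT(\wp)=\N/\P$ part, which the paper dismisses as straightforward.
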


\begin{proof}
It is straightforward to check that $\wp$ is a regular covering projection with the group of covering
transformations $\N/\P$.

Assume first that $g\N \in (\No_{W_n}(\N)\cap\No_{W_n}(\P))/\N$. Then by \cref{lem:aut}, $g\N$ is an automorphism of $\U_n/\N$
and $g\P$ is an automorphism of $\U_n/\P$. Even more, by the definition of $g\N$ and $g\P$, the former is the projection
of the latter along $\wp$.

Conversely let $g\N$ be an automorphism of $\U_n/\N$ that lifts to an automorphism $h\P$ of $\U_n/\P$. 
By \cref{lem:aut},
it follows that $g \in \No_{W_n}(\N)$ and that $h \in \No_{W_n}(\P)$. Since $h\P$ is a lift of $g\N$ along
$\wp$, it follows that $g\N$ must map $\wp(\P) = \N$ into $\wp(g\P) = g\N$, or 
equivalently that $g\N = h\N$. Therefore $h \in \No_{W_n}(\N)$. Hence $g\N = h\N \in (\No_{W_n}(\N)\cap\No_{W_n}(\P))/\N$,
which proves the first claim.
Note that we showed that $h\P$ is a lift of $h\N$ for every $h \in \No_{W_n}(\N)\cap\No_{W_n}(\P)$. This shows that the lift of $(\No_{W_n}(\N)\cap\No_{W_n}(\P))/\N$ is
$(\No_{W_n}(\N)\cap\No_{W_n}(\P))/\P$.
%
%
    %Let $\pi_{\P}$ and $\pi_{\N}$ denote the maps defined in \cref{lem:aut} for $\P$ and $\N$. 
    %An automorphism of $\U_n/\N$ lifts if and only if it is the projection of an automorphism of 
    %$\U_n/\P$. By \cref{lem:aut} an automorphism of $\U_n/\P$ is of the form $\pi_{\P}(h)$ for some
	  %%$h\in\No_{W_n}(\P)$.
    %It projects to $\U_n/\N$ if and only if
%
%It projects to an automorphism $\U_n/\N$ if and only if its projection 
%		$\omega\N \mapsto \omega h \N$ is well de, 
%		which, by \cref{lem:aut}, is if and only if $h\in\No_{W_n}(\N)$.
%		 Hence for $h\in\No_{W_n}(\P)$, the automorphism $\pi_{\P}(h)$ projects to an automorphism of 
%    $\U_n/\N$ if and only if $h$ normalizes $\N$. The automorphisms of $\U_n/\N$ that lift along $\wp$ 
 %    are therefore exactly those of the form $\pi_{\N}(h)$ with
%	$h\in\No_{W_n}(\N)\cap\No_{W_n}(\P)$, and since $\pi_{\N}$ has kernel $\N$ this gives us the expression
 %  in the statement of the lemma.
\end{proof}

\begin{lemma}\label{lem:cover2}
Let $X$ be a finite premaniplex of rank $n$. Then there exist a finite maniplex $M$ and a
regular covering projection $\wp\colon M\rightarrow X$ along which every automorphism of $X$
lifts. 
\end{lemma}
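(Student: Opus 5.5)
By \cref{lem:quotient} we may write $X\cong\U_n/\N$ for some $\N\le W_n$. Since $X$ is finite, $\N$ has finite index in $W_n$. The plan is to find a normal subgroup $\P$ of $W_n$ with $\P\le\N$, $\P$ of finite index, and $\P$ containing no conjugate of any $r_i$ or of any $r_ir_j$ with $i\ne j$. Given such a $\P$, set $M:=\U_n/\P$. By \cref{lem:quotient}(ii) $M$ is a maniplex, and it is finite because $\P$ has finite index. Since $\P\trianglelefteq W_n$ and $\P\le\N$, we have $\P\trianglelefteq\N$, so \cref{lem:cover} shows that $\wp\colon w\P\mapsto w\N$ is a regular covering projection. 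Moreover $\No_{W_n}(\P)=W_n$, so the largest subgroup of $\Aut(\U_n/\N)$ that lifts is $(\No_{W_n}(\N)\cap W_n)/\N=\No_{W_n}(\N)/\N$. By \cref{lem:aut} this is all of $\Aut(X)$.

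It remains to construct $\P$. We take $\P:=\N_0\cap K$, where:
\begin{itemize}
\item $\N_0=\bigcap_{w\in W_n}w^{-1}\N w$ is the normal core of $\N$. It is the kernel of the action of $W_n$ on the finite coset space $W_n/\N$, so it is normal of finite index and lies in $\N$.
\item $K$ is a finite index normal subgroup of $W_n$ that avoids every conjugate of $r_i$ and of $r_ir_j$ for $i\ne j$.
\end{itemize}
An intersection of two normal subgroups of finite index is again normal of finite index. Since $\P\le K$, $\P$ also avoids these conjugates, because $K$ does.

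The only real step is to produce $K$. A convenient choice is the kernel of a homomorphism $\varphi\colon W_n\to F$ to a finite group in which each $\varphi(r_i)$ and each $\varphi(r_ir_j)$, $i\ne j$, is nontrivial. Such a kernel is normal, and it contains no conjugate of these elements because their images, and hence the images of their conjugates, are nontrivial. Take $F=(\ZZ/2)^n$ and $\varphi(r_i)=e_i$. The defining relations hold because $F$ is elementary abelian. Then $\varphi(r_i)=e_i\ne0$ and $\varphi(r_ir_j)=e_i+e_j\ne0$ for $i\ne j$. The kernel $K$ is the even-word subgroup, of index $2^n$.

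This construction is routine, and no step is a serious obstacle. The one point to check carefully is that intersecting with $\N_0$ preserves the lifting property. This holds because normality of $\P$ in all of $W_n$ makes the normaliser condition in \cref{lem:cover} vacuous.
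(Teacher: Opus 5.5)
Your proof is correct. It rests on the same key ingredient as the paper's, the colour-parity map $r_i\mapsto e_i$ into $\ZZ_2^n$, but you run it through the subgroup machinery of \cref{sec:quotients} rather than through voltages.

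The paper assigns voltage $e_i$ to every $i$-dart of $X$ and takes a connected component of the derived graph. It then checks directly that this component has no semiedges or parallel edges, and that alternating $i,j$-walks of length $4$ lift to $4$-cycles. Lifting of every automorphism follows because the voltage of a walk depends only on its colour sequence.

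In your notation that component is exactly $\U_n/(\N\cap K)$, so the normal core is not needed. Since $K\trianglelefteq W_n$, every $g\in\No_{W_n}(\N)$ normalises $\N\cap K$, and \cref{lem:cover} already shows that all of $\Aut(X)$ lifts.

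Intersecting with $\N_0$ is harmless and gives something extra: $\P\trianglelefteq W_n$, so $\Aut(M)=W_n/\P$ acts regularly on the flags of $M$. The price is a potentially much larger cover. The index $[W_n:\N_0]$ is the order of the permutation group induced by $W_n$ on the flags of $X$, which can be far larger than $|X|$, whereas the paper's cover has at most $2^n|X|$ flags.

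The paper's argument is also self-contained, while yours depends on \cref{lem:quotient}(ii) and \cref{lem:aut}, which the paper quotes as folklore without proof. In exchange, your version produces $M_1$ directly in the form $\U/\N$ that is used in the proof of \cref{th:lift}.
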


\begin{proof}
	Let $G=\ZZ_2^n$ with standard basis $e_0,\dots,e_{n-1}$, and let $\zeta$ assign to every
	dart of colour $i$ of $X$ the voltage $e_i$. Since every $e_i$ is an involution, this is
	a voltage assignment. Let $M$ 
be a connected component of the derived graph. Then $M$ is a regular cover of $X$. We claim that $M$ is a maniplex. Since $M$ is a cover of 
  $X$, it is a properly $n$-edge coloured graph. It contains no parallel edges,
  since such edges have different colour, and edges of different colours have
  voltage whose product is not $0$. Since no dart has trivial voltage, $M$ contains no semiedges. Since every $i,j$-walk $W$ for $|i-j|\ge2$ in $X$ of length $4$
  is closed and has voltage $0$, its lift $\tilde W$ is also closed in $M$. Additionally, since
  no proper non-empty subwalk of $W$ has voltage $0$, it follows that $\tilde W$ is a cycle. This proves that $M$ is a maniplex.

	Finally, let $g$ be an automorphism of $X$. As $g$ preserves the colours of
	darts and the voltage of a dart depends only on its colour, $\zeta(W^g)=\zeta(W)$, for
	every walk $W$ of $X$. Therefore $g$ lifts to an automorphism of $M$.
\end{proof}

\subsection{Bass--Serre trees for $W_3$ and $W_4$}\label{sec:tree}

In this section we define trees $T_3$ and $T_4$, which we will need later, and recall some facts about them from the Bass--Serre theory.
See~\cite{Serre1980} for more (note that we use right actions, and
\cite{Serre1980} uses left actions). 

For $n=3$ we have $W_3=\langle r_0,r_2\rangle*\langle r_1\rangle$, so we may define a tree $T_3$ with
the vertex set equal
to the disjoint union of $W_3/\langle r_0,r_2\rangle$ and $W_3/\langle r_1\rangle$, its
edge set is $W_3$, and the edge $w$ joins $\langle r_0,r_2\rangle w$ to
$\langle r_1\rangle w$. The vertices in $W_3/\langle r_0,r_2\rangle$ have valency $4$ and
those in $W_3/\langle r_1\rangle$ have valency $2$. The group $W_3$ acts on $T_3$ by right
multiplication. The vertex stabilisers are the
conjugates of $\langle r_0,r_2\rangle$ and of $\langle r_1\rangle$, and the edge
stabilisers are trivial. There are two orbits of vertices and one orbit of edges for the action of $W$.

For $n=4$ we have $W_4\cong\langle r_0,r_2\rangle *_{\langle r_0\rangle} \langle r_0,r_3\rangle
*_{\langle r_3\rangle} \langle r_1,r_3\rangle$. Let $T_4$ be the tree with the vertex set equal to the
disjoint union of the coset spaces
\[
	W_4/\langle r_0,r_2\rangle, \qquad W_4/\langle r_0, r_3\rangle, \qquad W_4/\langle r_1,r_3\rangle,
\]
its edge set is the disjoint union of $W_4/\langle r_0\rangle$ and
$W_4/\langle r_3\rangle$, and $\langle r_0\rangle w$ joins
$\langle r_0,r_2\rangle w$ to $\langle r_0, r_3\rangle w$, while $\langle r_3\rangle w$ joins
$\langle r_0, r_3\rangle w$ to $\langle r_1, r_3\rangle w$.
The vertices in $W_4/\langle r_0,r_3\rangle$ have valency $4$, and all the other vertices
have valency $2$. Similarly, the group $W_4$ acts on $T_4$ by right multiplication. The vertex stabilisers are the
conjugates of $\langle r_0,r_2 \rangle$,
$\langle r_0,r_3\rangle$ and $\langle r_1,r_3\rangle$, and the edge stabilisers are the
conjugates of $\langle r_0\rangle$ and of $\langle r_3\rangle$. There are three orbits of
vertices and two orbits of edges for the action of $W$.

\section{Proof of the main theorem}
We start by recalling the lemma that is the main ingredient of our proof.

\begin{theorem}[{\cite[Theorem 5]{PotocnikSpiga2019}}]
\label{thm:lifting5}
Let $p$ be an odd prime, let ${T}$ be an infinite tree, let
$\G\le\operatorname{Aut}({T})$, let $\N$ be a
non-identity normal subgroup of $\G$ of finite index such that
$\N_x=1$ for every vertex and for every edge $x$ of ${T}$,
and let $\H=\No_{\operatorname{Aut}({T})}(\N)$.
If $\H/\N$ acts faithfully on
$\mathrm{H}_1({T}/\N;\mathbb{Z})$, then there exists a normal
subgroup $\P$ of $\N$ of finite index such that
$\No_{\H}(\P)=\G$ and $\N/\P$
is a $p$-group.
\end{theorem}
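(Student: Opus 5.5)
The plan is to pass to the finite graph $\Gamma:={T}/\N$, exploit that $\N$ is free, and then build $\P$ inside a suitable elementary abelian $p$-quotient of $\N$, so that a linear-algebra condition forces $\No_\H(\P)=\G$. Since $\N_x=1$ for all vertices and edges, $\N$ acts freely on ${T}$. Hence ${T}\to\Gamma$ is a universal covering, $\N\cong\pi_1(\Gamma)$ is free of finite rank $r$, and $\N/\N'\cong \mathrm{H}_1(\Gamma;\ZZ)\cong\ZZ^r$. The group $\H/\N$ acts on $\Gamma$, and conjugation on $\N$ induces the natural action on $\mathrm{H}_1$.

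\emph{Step 1 (reduction mod $p$).} Let $\P_0:=\N'\N^p$. This subgroup is characteristic in $\N$, so it is normalised by all of $\H$. Moreover $V:=\N/\P_0\cong \mathrm{H}_1(\Gamma;\ZZ_p)$. I claim $\H/\N$ still acts faithfully on $V$. This is the classical Minkowski argument: for $p$ odd, the kernel of $\mathrm{GL}_r(\ZZ)\to\mathrm{GL}_r(\ZZ_p)$ is torsion-free, and $\H/\N$ is finite because it is a group of automorphisms of the finite graph $\Gamma$. This is exactly where $p$ odd is used.

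\emph{Step 2 (enlarging the module).} Apply the same construction once more. Set $K:=\P_0'\P_0^p$. It is characteristic in $\P_0$, hence in $\N$, hence normal in $\H$, and $\N/K$ is a finite $p$-group. By Gaschütz's theorem on the homology of regular covers, $\P_0/K\cong \mathrm{H}_1({T}/\P_0;\ZZ_p)$ is isomorphic as an $\N/\P_0$-module to $\ZZ_p\oplus\ZZ_p[\N/\P_0]^{\,r-1}$. Since $\N\neq1$ has finite index in $\G$ and ${T}$ is infinite, we have $r\ge2$, so a free summand is present. The group $\H/K$ acts on this module compatibly with its action on $V=\N/\P_0$.

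\emph{Step 3 (choosing $\P$).} I will look for $\P$ with $K\le\P\le\N$ and $\P\trianglelefteq\N$, of the form $\P=K\langle U\rangle$ for some subgroup $U$ of $\P_0/K$ (possibly enlarged by part of $\N/\P_0$). The condition $\No_\H(\P)=\G$ then becomes: $U$ is $\G$-invariant, and no element of $\H\setminus\G$ preserves $U$. First I will try taking $U$ to be the $\ZZ_p[\G/K]$-submodule generated by a well-chosen vector $w$ in the free part. Faithfulness from Step 1, combined with the freeness of $\ZZ_p[\N/\P_0]$, should let me pick $w$ whose $\G$-span is not preserved by any $h\in\H\setminus\G$. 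The idea is that $h$ acts on $V$ differently from every element of $\G$, and this difference is seen on the coefficients of $w$ in the regular module. Finiteness of index and the $p$-group property of $\N/\P$ are automatic since $K\le\P$.

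The main obstacle is Step 3: producing a single $\G$-invariant subspace whose stabiliser in $\H$ is exactly $\G$. If a single submodule does not suffice, I will iterate the Frattini-type construction a few more times, $K_{i+1}:=K_i'K_i^p$, to obtain arbitrarily many free summands. I would then use a counting argument: the number of $\G$-invariant submodules of the required shape should exceed the number of those also invariant under some $h\in\H\setminus\G$, of which there are only finitely many cosets to handle. This would yield the desired $\P$.
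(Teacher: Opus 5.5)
The paper does not prove this statement; it quotes it from Poto\v{c}nik--Spiga. Your outline therefore has to stand on its own, and it has a genuine gap exactly where the theorem has its content. Steps 1 and 2 only produce $\H$-invariant layers $\N/\P_0$ and $\P_0/K$. The whole point is to find a $\G$-invariant subgroup $U$ of such a layer whose stabiliser in $\H$ is \emph{exactly} $\G$, and Step 3 does not do this: ``should let me pick $w$'' and ``should exceed'' are hopes, not arguments. You give no mechanism forcing $w^h\notin U$ for $h\in\H\setminus\G$. The counting argument is also not set up. For each of the finitely many subgroups $\langle\G,h\rangle$ with $h\in\H\setminus\G$, you would need an upper bound on the number of $\langle\G,h\rangle$-invariant subspaces of the chosen type, beaten by a lower bound on the number of $\G$-invariant ones. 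Nothing in the proposal controls how $h$ acts on $\P_0/K$ relative to $\G$. The difficulty is already visible in the first layer: if $\G=\N$ and some $h$ acts as $-1$ on $V$ (e.g.\ $T/\N$ a theta graph, with $h$ swapping its two vertices), then $h$ preserves every subspace of $V$.

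Moreover, the structural input for Step 3 is false. The decomposition $\ZZ_p\oplus\ZZ_p[Q]^{r-1}$ of a relation module (Gasch\"utz) is valid in the coprime case $p\nmid|Q|$, but here $Q=\N/\P_0\cong\ZZ_p^r$ is a $p$-group. Consider the exact sequence $0\to\P_0/K\to\ZZ_p[Q]^r\to\ZZ_p[Q]\to\ZZ_p\to0$. The map $\ZZ_p[Q]^r\to I_Q$ onto the augmentation ideal is a projective cover, because $\dim I_Q/I_Q^2=d(Q)=r$. Hence $\P_0/K\cong\Omega^2(\ZZ_p)$, which has no nonzero free summand (it is indecomposable by Heller's lemma). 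The same holds for every deeper layer $K_i/K_{i+1}$, viewed as a module for the $p$-group $\N/K_i$, because $d(\N/K_i)=r$ as well. So iterating never produces the free summands whose coefficients you want to read off.

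Two smaller points. First, you use that $T/\N$ is finite (for finite rank, and for finiteness of $\H/\N$ in Minkowski's lemma), but this is not among the stated hypotheses. Second, $r\ge2$ does not follow from the hypotheses as stated. Take $T$ a line, $\G=\N$ the translation group and $\H=\Aut(T)$. Then $\H/\N\cong\ZZ_2$ acts as $-1$ on $\mathrm{H}_1(T/\N;\ZZ)\cong\ZZ$, so faithfully, yet every finite-index subgroup of $\N$ is normal in $\H$. So an extra hypothesis such as ``$T$ is not a line'' is needed; it does hold in the paper's application.
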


The hypothesis about the first homology group is not difficult to satisfy. The next
lemma is proved in~\cite[Proposition 5]{EstelyiKarabasMednykhNedela2021} for simple
graphs, and the general case follows by subdividing every edge twice. We include a
proof for the sake of completeness.

\begin{lemma}\label{rem:H1}
	Let $\Gamma$ be a finite connected graph without semiedges in which every vertex has
	valency at least $2$ and which is not a cycle. Then $\Aut(\Gamma)$ acts faithfully on
	$\mathrm{H}_1(\Gamma;\ZZ)$.
\end{lemma}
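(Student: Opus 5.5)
We must show that every nontrivial automorphism $g$ of $\Gamma$ acts nontrivially on $\mathrm{H}_1(\Gamma;\ZZ)$. The plan is to find a closed walk, written as a cycle in $\mathrm{H}_1$, whose class is moved by $g$. Recall that $\mathrm{H}_1(\Gamma;\ZZ)$ is the cycle space: integer combinations of edges (each with a chosen orientation) with zero boundary. A cycle $C$ of $\Gamma$ gives a class $[C]$, and $g[C]=[C^g]$ with induced orientation. Two distinct cycles, as edge sets, give distinct classes, and no class $[C]$ equals $-[C']$ unless $C=C'$ with reversed orientation. So it suffices to find a cycle $C$ with $C^g\ne C$ as edge sets, or with $C^g=C$ but $g$ reversing its orientation. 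The exception is when $C^g=C$ with orientation preserved for every cycle, so this is the case to rule out.

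Since every vertex has valency at least $2$ and $\Gamma$ is finite, connected and not a cycle, every edge lies on a cycle or on a path between cycles. Also $\Gamma$ has at least two distinct cycles, and in fact every edge $e$ lies in some cycle or is a bridge. Suppose $g$ fixes every oriented cycle. Then $g$ fixes every edge lying on a cycle, together with its orientation, so it fixes both endpoints of every such edge. Parallel edges and loops are handled because cycles of length $1$ and $2$ are allowed: a loop is a cycle, and two parallel edges form a $2$-cycle. Hence $g$ fixes every non-bridge dart.

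For bridges, the minimum-valency hypothesis means each side of a bridge contains a cycle. Consider a bridge $e=uv$. A closed walk that goes around a cycle on the $u$ side, crosses $e$, goes around a cycle on the $v$ side, and returns is not a cycle. Instead, use the following argument. Because every leaf block of the block-cut tree is $2$-connected or a loop, the bridges and cut vertices are determined by the cycles. More concretely, let $F$ be the set of vertices lying on cycles. Each bridge path connects vertices of $F$ through vertices of valency $\ge 2$, and these vertices are fixed. A path whose two ends are fixed is mapped by $g$ to a path with the same ends. Since bridges are unique connections in a tree-like structure, the path is preserved. Hence $g$ fixes all edges.

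It remains to check that $g$ fixes the darts of those edges. A non-semiedge edge with both endpoints fixed and itself fixed has its darts fixed, unless it is a loop. Loops lie on cycles and were handled by orientation. So $g$ is trivial.

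The main obstacle is the case where $g$ fixes every cycle as a set but might act as a rotation or reflection on it. Rotation is excluded because it must also fix the other cycles through the rotated vertices. Since $\Gamma$ is not a cycle, some vertex of $C$ has valency at least $3$, or $C$ connects to another cycle via a path. I would argue this carefully as follows. If $g$ rotates $C$ nontrivially while preserving orientation, pick a vertex $x$ of $C$ of valency at least $3$. Using the extra edge, build a second cycle $C'$ that passes through $x$ and shares only a proper arc with $C$. This cycle exists by the $2$-edge-connectivity of the non-bridge part, or via a bridge to another cycle, in which case we use the lollipop-free argument with theta subgraphs. Then $g$ must fix $C\cap C'$ setwise, and hence must fix $C$ pointwise. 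A reflection reverses the orientation of $[C]$, so it already acts nontrivially on $\mathrm{H}_1$. Combining these, $g$ is the identity, which gives faithfulness.
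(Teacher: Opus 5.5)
Your reduction to an automorphism $g$ that preserves every cycle together with its orientation is the same as the paper's. Your bridge paragraph is sketchy, but it can be made rigorous once all vertices on cycles are known to be fixed.

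The gap is in the step that carries the weight: excluding a nontrivial rotation of a cycle $C$. Your earlier paragraphs already assume this when they say that $g$ fixes every edge lying on a cycle. You claim that a vertex $x$ of $C$ of valency at least $3$ lies on a second cycle $C'$ meeting $C$ in a proper arc. This fails whenever every edge leaving $C$ is a bridge. If $\Gamma$ consists of two disjoint cycles joined by a path, then $C$ is the only cycle through any of its vertices, so there is no $C'$ and no theta subgraph. The fallback you invoke (``the lollipop-free argument with theta subgraphs'') is not an argument. Even when the extra edge at $x$ lies on a cycle, $C'$ need not share an arc with $C$ (take two cycles with a single common vertex). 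There your conclusion happens to survive, since $g$ fixes the common vertex, but $2$-edge-connectivity is not what produces $C'$.

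The missing idea is the paper's second case. Let $K$ be the component of $\Gamma$ minus the edges of $C$ that contains an edge at $x$ outside $C$. Since $g$ preserves the edge set of $C$, it permutes these components.
\begin{itemize}
\item If $K$ meets $C$ in two distinct vertices, take a shortest path in $K$ joining two such vertices. It is an ear, and together with an arc of $C$ it forms a cycle $C'$ with $C\cap C'$ a proper arc, so your argument applies.
\item Otherwise $K$ meets $C$ only in $x$. Every other vertex of $K$ has all its edges in $K$, so it keeps valency at least $2$ there. Hence $K$, which has at most one vertex of valency below $2$, contains a cycle $D$. As $g$ fixes $D$ setwise, it maps $K$ to itself and therefore fixes $x$. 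An orientation-preserving rotation of $C$ with a fixed vertex is trivial.
\end{itemize}
The paper phrases this as follows: a nontrivial rotation moves every vertex of $C$, so it maps each component hanging off $C$ to a different one and thus moves a cycle.
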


\begin{proof}
  It is enough to show that there
  exists no nonidentity $g\in\Aut(\Gamma)$ that fixes every cycle
  setwise, as well as their direction. Suppose that such a $g$ exists.
  
  %Note that because of the assumption
  %on degrees of vertices, $\Gamma$ contains a cycle. 
  Note also that $g$ cannot fix all vertices and only move some dart $x$, since then either $x$ is a loop, or the edges of $x$ and $x^g$, have the same
  endvertices. In both case $g$ does not fix all cycles together with their directions.

  We first aim to show that there exists at least one vertex that is moved by $g$ and lies on a
  cycle. Suppose that there exists no such vertex. Let $u$ be a vertex that is moved by $g$ and lies on
  no cycles. Observe that it is a cut-vertex, and that each of the at least two connected
  components of $\Gamma - u$ contains a cycle. Choose vertices $a$ and $b$ lying on cycles in two different components
  of $\Gamma-u$. Then $u$ lies on every path from $a$ to $b$. By our assumption $a$ to $b$ are fixed by $g$.
The vertex $u^g$ also lies on every path from $a$ to $b$, and $u$ and $u^g$ are at the same distance from $a$.
Therefore they
  lie at the same position on any shortest path from $a$ to $b$. This implies that $u^g=u$, which is a contradiction.
  We choose $v$ to be
  a vertex that is moved by $g$ and lies on a cycle.

  Now let $C$ be the cycle containing $v$. Since $g$ fixes the cycles
  setwise, and their directions, $C$ also contains $v^g$, and $g$ induces
  a nontrivial rotation on $C$. Let $\Gamma \setminus C$ denote the graph obtained from $\Gamma$ by
  deleting the edges of $C$. Each of its components contains a vertex of $C$.
  Now we separate two options. Either there exist a walk $W$
  in $\Gamma \setminus C$ connecting two distinct vertices in $C$, or each component of
  $\Gamma \setminus C$ contains exactly one vertex of $C$. In the first case, there exists a cycle $C'$ containing $W$
  and an arc of $C$. Since $g$ also fixes $C'$ setwise, and preserves its direction,
  it fixes that arc, and hence its endvertices, contradicting that it acts on $C$ as a nontrivial rotation.
  In the second case, each component of
  $\Gamma \setminus C$
  contains a cycle whenever it contains at least two vertices. Such a component exists because
  $\Gamma$ is not a cycle. The
  components are permuted by $g$, and none of them is fixed, hence $g$ moves a cycle.

  In both cases we arrive to a contradiction, which completes the proof.
\end{proof}

We use \cref{thm:lifting5} in the following form.

\begin{theorem}\label{thm:lifting5'}
Let $p$ be an odd prime, let $W\in \{W_3, W_4\}$, let $\G\le W$ and let $\N$ be a non-identity
normal subgroup of $\G$ of finite index in $W$ that contains none of the
elements $r_i$, $r_ir_j$ for $i \ne j$,
 or their conjugates. 
 Let
$\H'=\No_{W}( \N)$. Then there exists a normal
subgroup $\P$ of $\N$ of finite index such that
$\No_{\H'}(\P)=\G$ and $\N/\P$ is a
$p$-group.
\end{theorem}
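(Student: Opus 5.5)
The plan is to apply \cref{thm:lifting5} to the Bass--Serre tree $T=T_3$ or $T=T_4$ of \cref{sec:tree}, with $W$ acting on it by right multiplication. I then pass from $\H=\No_{\Aut(T)}(\N)$ to $\H'=\No_W(\N)$ at the end. Throughout, $\G$ and $\N$ are regarded as subgroups of $\Aut(T)$.

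\textbf{Step 1: $W$ acts faithfully on $T$.} The kernel of the action lies in every edge stabiliser. For $T_3$ the edge stabilisers are trivial. For $T_4$ the edge stabilisers are conjugates of $\langle r_0\rangle$ and $\langle r_3\rangle$. Their intersection is trivial, since for instance $r_0\ne r_1r_0r_1$. So $W$, and hence $\G$ and $\N$, embed in $\Aut(T)$.

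\textbf{Step 2: the hypotheses of \cref{thm:lifting5} other than faithfulness.}
\begin{itemize}
\item $\N\ne 1$ is given.
\item $\N$ is normal in $\G$, and it has finite index in $\G$ because it has finite index in $W$.
\item The vertex stabilisers of $T$ are conjugates of the groups $\langle r_0,r_2\rangle$, $\langle r_1\rangle$, $\langle r_0,r_3\rangle$, $\langle r_1,r_3\rangle$. Each of these has order at most $4$, and its nonidentity elements are exactly of the form $r_i$ or $r_ir_j$. The edge stabilisers are contained in vertex stabilisers. The assumption on $\N$ therefore gives $\N_x=1$ for every vertex and every edge $x$.
\end{itemize}

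\textbf{Step 3: the homology condition.} I need $\H/\N$ to act faithfully on $\mathrm{H}_1(T/\N;\ZZ)$, and I will deduce this from \cref{rem:H1} applied to $\Gamma=T/\N$.
\begin{itemize}
\item $\Gamma$ is finite, since $W$ has finitely many orbits on $T$ and $|W:\N|<\infty$.
\item $\Gamma$ is connected, being a quotient of a tree.
\item Every edge of $T$ joins vertices in different $W$-orbits, so no element of $W$ inverts an edge. Combined with $\N_x=1$, this means $\N$ acts freely on vertices and darts, so $T\to\Gamma$ is a regular covering projection with $\CT=\N$.
\item Consequently $\Gamma$ has no semiedges, and its valencies are those of $T$, namely $2$ and $4$. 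Since valency-$4$ vertices occur, $\Gamma$ is not a cycle.
\end{itemize}
By \cref{rem:H1}, $\Aut(\Gamma)$ acts faithfully on $\mathrm{H}_1(\Gamma;\ZZ)$.

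It remains to check that $\H/\N\to\Aut(\Gamma)$ is injective. This map is well defined because $\H$ normalises $\N$. Suppose $h\in\H$ acts trivially on $\Gamma$. Then $h$ is a lift of the identity along the regular covering $T\to\Gamma$, so $h$ is a covering transformation, and hence $h\in\N$.

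\textbf{Step 4: conclusion.} \cref{thm:lifting5} now yields a normal subgroup $\P\trianglelefteq\N$ of finite index with $\N/\P$ a $p$-group and $\No_\H(\P)=\G$. Since $\H'=\H\cap W$, we get
\[
\No_{\H'}(\P)=\No_\H(\P)\cap W=\G\cap W=\G .
\]

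I expect Step 3 to be the main point requiring care: the absence of edge inversions, the freeness of the $\N$-action giving a genuine regular covering, and the injectivity of $\H/\N\to\Aut(T/\N)$. The rest is a direct verification of hypotheses.
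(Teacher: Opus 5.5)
Your proposal is correct and follows the paper's proof essentially step for step: you apply \cref{thm:lifting5} to the Bass--Serre tree $T$, use the hypothesis on $\N$ to get trivial vertex and edge stabilisers, obtain faithfulness on $\mathrm{H}_1(T/\N;\ZZ)$ from \cref{rem:H1}, and finish with $\H'=\H\cap W$. The differences are cosmetic: you prove faithfulness of $W$ on $T$ via edge rather than vertex stabilisers, and you give an explicit covering-transformation argument for the injectivity of $\H/\N\to\Aut(T/\N)$, which the paper takes for granted. You should also state explicitly that $T$ is infinite, as \cref{thm:lifting5} requires, though this is immediate.
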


\begin{proof}
	Let $T$ be the Bass-Serre tree of $W$ that is described in \cref{sec:tree}. In both cases it
	is an infinite tree without semiedges. The action of $W$ on $T$ is faithful, since its kernel is
	contained in the intersection of the vertex stabilisers, which is trivial in both cases. In particular,
	$W\le\Aut(T)$. Let $\H$ be the
	normaliser of $\N$ in $\Aut(T)$. We will check the hypotheses of \cref{thm:lifting5}
	for the tree $T$ and the group $\N\trianglelefteq\G$.

	The index of $\N$ in $\G$ is finite by assumption. Every non-trivial element of a vertex
	stabiliser of $T$ is a conjugate of some $r_i$ or of some $r_ir_j$ with $i\ne j$, so the
	vertex stabilisers, and hence also the edge stabilisers, intersect $\N$ trivially.
	Therefore $\N_x=1$ for every vertex and for every edge $x$ of $T$.

	It remains to check that $\H / \N$ acts faithfully on
	$\mathrm{H}_1(T/ \N; \ZZ)$. Since $\N$ acts freely on the
	vertices of $T$, and hence on its darts, the valencies are preserved. 
  Therefore every vertex of
	$T/\N$ has valency $2$ or $4$ and at least one has valency $4$.
   Since there are no edges between vertices
   in the same $W$-orbit on $T$, $T/\N$ contains no semiedges. Note that it is connected. 
   Since $\N$ has finite
    index in $W$,
	which has finitely many orbits on the vertices and on the edges of $T$,
	 $T/ \N$ is finite.
	The group $\H/\N$ is a subgroup of the full automorphism group of the
	graph $T/\N$, hence it suffices to show that $\Aut(T/\N)$ acts
	faithfully on $\mathrm{H}_1(T/ \N ; \ZZ)$, which it does by \cref{rem:H1}.

By \cref{thm:lifting5} there exists a normal subgroup $\P$ of $\N$ of
finite index with $\N/\P$ a $p$-group and
$\No_{\H}(\P)=\G$. Finally note that $\H'=\H\cap W$,
so $\No_{\H'}(\P)=\No_{\H}(\P)\cap W
=\G\cap W$. Since $\G\le W$, it follows that $\No_{\H'}(\P) = \G$.
\end{proof}

In fact for  a finite premaniplex of rank $3$ or $4$, we can prescribe exactly which
subgroup of automorphisms should lift, following \cite{PotocnikSpiga2019}.

\begin{theorem}\label{th:lift}
	Let $X$ be a finite premaniplex of rank $3$ or $4$ and let $G\le\Aut(X)$. Then there
	exist a finite maniplex $M$ and a regular covering projection $\wp\colon M\rightarrow X$
	such that
	such that $\Aut(M)$ is the lift of $G$ along $\wp$.
\end{theorem}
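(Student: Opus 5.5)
We first pass to a maniplex cover of $X$ and then refine it with \cref{thm:lifting5'}. By \cref{lem:cover2} there is a finite maniplex $M_0$ with a regular covering projection $\wp_0\colon M_0\rightarrow X$ along which every automorphism of $X$ lifts. Let $\tilde G\le\Aut(M_0)$ be the lift of $G$. Since $G$ lifts and $\CT(\wp_0)$ acts regularly on fibres, $\tilde G$ contains $\CT(\wp_0)$, and $M_0/\tilde G\cong X/G$. By \cref{lem:quotient} and \cref{lem:aut} we write $M_0=\U_n/\N$ with $\N\le W_n$. The subgroup $\N$ has finite index because $M_0$ is finite, and it contains no conjugate of any $r_i$ or $r_ir_j$ because $M_0$ is a maniplex. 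Under the identification $\Aut(\U_n/\N)=\No_{W_n}(\N)/\N$, the group $\tilde G$ corresponds to a subgroup $\G$ with $\N\trianglelefteq\G\le\No_{W_n}(\N)$. Moreover $X\cong\U_n/\mathcal K$, where $\mathcal K$ is the preimage of $\CT(\wp_0)$, so $\N\trianglelefteq\mathcal K\le\G$. The subgroup $\N$ is non-identity because $W_n$ is infinite and $\N$ has finite index.

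Next we apply \cref{thm:lifting5'} to $W=W_n$, $\G$ and $\N$, with some odd prime $p$. This gives $\P\trianglelefteq\N$ of finite index such that $\No_{\H'}(\P)=\G$, where $\H'=\No_{W_n}(\N)$. Set $M=\U_n/\P$. It is finite, and it is a maniplex by \cref{lem:quotient}(ii), since $\P\le\N$ contains no forbidden conjugates.

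It remains to check two things: that $\wp\colon M\rightarrow X$, $w\P\mapsto w\mathcal K$, is a regular covering, and that $\Aut(M)$ is exactly the lift of $G$. This is the main obstacle, because $\No_{\H'}(\P)=\G$ only controls those elements of $W_n$ that normalise $\N$. An arbitrary element of $\No_{W_n}(\P)$ need not normalise $\N$.

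For regularity, $\G\le\No_{W_n}(\P)$, so $\P\trianglelefteq\mathcal K$. By \cref{lem:cover} the projection $\U_n/\P\rightarrow\U_n/\mathcal K$ is then regular with covering group $\mathcal K/\P$.

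For the automorphism group, consider the lift of $G$ along $\wp$. Lifts of $M_0\to X$ compose with lifts of $M\to M_0$, and \cref{lem:cover} applied to $\P\trianglelefteq\N$ shows that the lift of $G$ is $\G/\P$. So we must show $\No_{W_n}(\P)=\G$. The inclusion $\G\le\No_{W_n}(\P)$ holds. For the converse, let $h\in\No_{W_n}(\P)$. We want $h$ to normalise $\N$; then $h\in\No_{\H'}(\P)=\G$. Since $\N/\P$ is a $p$-group, I would try to make $\N$ characteristic-like above $\P$. The first idea is to choose $M_0$ more carefully, so that $\N$ is the unique subgroup between $\P$ and $W_n$ with its index and $p$-power properties. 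Conjugation by $h$ sends $\N$ to $\N^h\supseteq\P$ of the same index, with $\N^h/\P$ a $p$-group. Then $\N\N^h/\P$ is a product of two normal-ish $p$-subgroups. If we choose $p$ coprime to $|W_n:\N|$ (possible, since $p$ may be any odd prime), then $\N\N^h$ has index in $W_n$ dividing $|W_n:\N|$ while $|\N\N^h:\N|$ is a power of $p$. This forces $\N\N^h=\N$, so $\N^h=\N$. For this to work, $\N\N^h$ must be a subgroup; it is one because $\N^h$ normalises $\P$ and both are contained in $\No_{W_n}(\P)$, where $\N/\P$ and $\N^h/\P$ are subnormal $p$-subgroups of the finite group $\No_{W_n}(\P)/\P$. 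Making this rigorous needs the $p$-group structure, for instance by working inside $\langle\N,\N^h\rangle/\P$ and applying Sylow arguments. Hence $\Aut(M)=\No_{W_n}(\P)/\P=\G/\P$, which is the lift of $G$.
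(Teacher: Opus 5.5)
Your setup, your regularity argument via $\P\trianglelefteq\mathcal K$, and your identification of the lift of $G$ with $\G/\P$ are all fine and agree with the paper. Your regularity argument is in fact a bit cleaner than the paper's.

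The gap is the final step, where you must show $\No_{W_n}(\P)\le\No_{W_n}(\N)$. You claim $\N\N^h$ is a subgroup because $\N/\P$ and $\N^h/\P$ are subnormal $p$-subgroups of $Q/\P$, where $Q=\No_{W_n}(\P)$. Nothing you have established gives subnormality. \cref{thm:lifting5'} only controls elements that already normalise $\N$.

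Worse, the assumption is circular. With $p\nmid[W_n:\N]$, the group $\N/\P$ is a Sylow $p$-subgroup of $Q/\P$, and a subnormal Sylow subgroup is normal. So assuming subnormality is assuming the conclusion.

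Coprimality alone cannot give normality either. A Sylow $3$-subgroup of $A_4$ has index $4$, which is coprime to $3$, yet it is not normal. Nothing in the statement of \cref{thm:lifting5'} prevents $\N/\P$ from sitting in $Q/\P$ like this. The suggested Sylow arguments inside $\langle\N,\N^h\rangle/\P$ do not help: there $\N/\P$ and $\N^h/\P$ are just two conjugate Sylow $p$-subgroups, and nothing forces them to coincide.

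The missing idea is to choose the prime large, not merely coprime. Take $p>[W_n:\N]$ before applying \cref{thm:lifting5'}. Then:
\begin{enumerate}
\item $Q/\P$ is finite, since $[W_n:\P]<\infty$.
\item $[Q:\N]\le[W_n:\N]<p$, so $\N/\P$ is a Sylow $p$-subgroup of $Q/\P$.
\item The number $n_p$ of Sylow $p$-subgroups satisfies $n_p\equiv 1\pmod p$ and $n_p\mid[Q:\N]<p$, which forces $n_p=1$.
\end{enumerate}
Hence $\N\trianglelefteq Q$, so $Q\le\H'$ and $Q=\No_{\H'}(\P)=\G$. This gives $\Aut(M)=\G/\P$, which is the lift of $G$. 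This Sylow-counting argument is exactly the one used in the paper.
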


\begin{proof}
    By \cref{lem:cover2}, there exists a cover $\wp': M_1 \rightarrow X$, where $M_1$ is a maniplex, such that every automorphism of $X$ lifts to $M_1$. Let $\hat G = \wp'^{-1}(G)$. We aim to show that the maniplex $M_1$ admits a finite regular cover $\wp'': M \rightarrow M_1$, such that $\Aut(M) = \wp''^{-1}(\hat G)$. 

    Let $W$ and $\U$ be the universal string Coxeter group and the universal maniplex of the
	same rank as $X$.  
    Let $\N$ be a subgroup of $W$ such that 
  $\U/\N \cong M_1$, let $\G$ be the lift of $\hat G$ to $\No_W(\N)$ (so that
  $\G/\N=\hat G$), let $f:=[W:\N]$ be the number of flags of $M_1$, let $p$ be an odd prime with $p>f$, and
	let $\H':=\No_{W}(\N)$. The subgroup $\N$ is normal in $\G$ and of finite index $f$ in
	$W$, and it is non-trivial because $W$ is infinite. Since $\U/\N\cong M_1$ is a maniplex,
	$\N$ contains none of the elements $r_i$, $r_ir_j$ for $i\ne j$, or their conjugates by
	\cref{lem:quotient}. By \cref{thm:lifting5'} there exists a normal
	subgroup $\P$ of $\N$ of finite index with $\No_{W}(\P) \cap \No_{W}(\N)= \No_{\H'}(\P) = \G$ and 
     $\N/\P$ a $p$-group. Let $M = \U/\P$ and $\wp'': M \rightarrow M_1$ be the regular covering projection as defined in \cref{lem:cover}. Note that the largest automorphism group that lifts along $\wp''$ is 
     \[(\No_W(\N) \cap \No_W(\P))/\N =  \G/\N = \hat G.\] 
     
     Next we need to show that that the lift of $\hat G$ along $\wp''$ is the whole automorphism group of $M$, or equivalently that  \[(\No_W(\N) \cap \No_W(\P))/\P =  \No_W(\P)/\P.\]
     Let $Q = \No_W(\P)$. Since $\N$ has index $f$ in $W$, and $f < p$, it has index less than $p$ in $Q$. Recall that $\N/\P$ is a $p$-group, therefore it is a Sylow $p$-subgroup of $Q/\P$. We use this in a similar way to \cite{PotocnikSpiga2019}.
     Let $n_p$ denote the number of
	Sylow $p$-subgroups of $Q/\P$. Then $n_p$ is congruent to $1$ modulo $p$, and it divides
	the index of $\N/ \P$ in $Q/\P$. Since this index is smaller than $p$, it follows that $n_p$ is $1$. Hence $\N/\P$ is the unique Sylow $p$-subgroup of $Q/\P$. In particular, $\N/\P \trianglelefteq Q/\P$.  Hence by the third isomorphism theorem, $N \trianglelefteq \No_W(\P) = Q$. This implies that $\No_W(\P) \le \No_W(\N)$, and 
    hence $\No_W(\N) \cap \No_W(\P))/\P =  \No_W(\P)/\P$.

    We showed that the largest group of automorphisms of $M$ that lifts along $\wp''$ is $\hat G$, and that every automorphism of $M$ projects to $M_1$. Let $\wp := \wp'\circ \wp''$. A composition of covering projections is a covering projection. Since $\wp'{-1}(\id) \le \hat G$, it lifts along $\wp''$. Hence $\CT(\wp)$ acts regularly on the fibres of the projection, and $\wp$ is regular. We have shown that $\Aut(M) = \wp''^{-1}(\hat G) = \wp^{-1}(G)$. This completes the proof.
\end{proof} 

If we take $G=1$, then \cref{th:lift} proves \Cref{th:main}.

\begin{remark} 
  Note that the same method does not work for higher ranks.
  We could generalize \cref{thm:lifting5'} to the statement:
  let $p$ be
	an odd prime, let $W$ be a group that acts faithfully on an infinite tree $T$ that is not
	a line and in which at least one vertex has degree at least $3$, and every vertex has degree at least $2$, with finite vertex- and edge-
	stabilisers and with finitely many orbits on the vertices and on the edges. Let
	$\G\le W$, let $\N$ be a torsion-free normal subgroup of $\G$ of finite index in $W$ and
	let $\H=\No_{W}(\N)$. Then there exists a normal subgroup $\P$ of $\N$ of finite index
	such that $\No_{\H}(\P)=\G$ and $\N/\P$ is a $p$-group. 

  However, the groups $W_n$ for $n \ge 5$ are not virtually free, and
  therefore do not act on a tree in the required way.
\end{remark}

%maybe state the more general theorem
The second half of Problem \ref{prob:main} asks for the cover of $X$ to be polytopal. Subgroups $\P \le W$
such that $\U/\P$ is the flag graph of a polytope are called \emph{semisparse}
(see \cite{HubardToledo2023, Hartley1999}). Hence the question becomes:

\begin{question}
Can the subgroup $\P$ above always be chosen to be semisparse?
\end{question}

\section{Ackowledgements}
The author thanks Primož Potočnik for valuable discussions regarding this work.

This research project was supported by Javna agencija za znanstvenoraziskovalno in inovacijsko
dejavnost Republike Slovenije (ARIS), research program P1-0294. The author acknowledges the use of Claude and ChatGPT for preliminarily checking the author's proof ideas and finding a reference to the proof of \cref{rem:H1}.

\printbibliography
\end{document}